%
\input ./style/arxiv-vmsta.cfg
\documentclass[numbers,compress,v1.0.1]{vmsta}
\usepackage{vtexbibtags}

\volume{7}
\issue{1}
\pubyear{2020}
\firstpage{1}
\lastpage{15}
\aid{VMSTA150}
\doi{10.15559/20-VMSTA150}
\articletype{research-article}



\startlocaldefs

\newtheorem{thm}{Theorem}

\theoremstyle{definition}
\newtheorem{defin}{Definition}

\newtheorem{assumption}{Assumption}
\hyphenation{de-si-de-rium}

\urlstyle{rm}
\allowdisplaybreaks

\ifdefined\HCode 
\def\index#1{}
\else 
\fi

\endlocaldefs

\begin{document}

\begin{frontmatter}
\pretitle{Research Article}

\title{Stochastic two-species mutualism model with jumps}

\author[a]{\inits{Olg.}\fnms{Olga}~\snm{Borysenko}\ead[label=e1]{olga\_borisenko@ukr.net}}
\author[b]{\inits{O.}\fnms{Oleksandr}~\snm{Borysenko}\thanksref{cor1}\ead[label=e2]{odb@univ.kiev.ua}}
\thankstext[type=corresp,id=cor1]{Corresponding author.}
\address[a]{Department of Mathematical Physics,
\institution{National Technical University of Ukraine}, 37, Prosp. Peremohy, Kyiv, 03056, \cny{Ukraine}}

\address[b]{Department of Probability Theory, Statistics and Actuarial
Mathematics, \institution{Taras Shevchenko National University of Kyiv, Ukraine},
64 Volodymyrska Str., Kyiv, 01601 \cny{Ukraine}}




\markboth{Olg.~Borysenko, O.~Borysenko}{Stochastic two-species mutualism model with jumps}

\begin{abstract}
The existence and uniqueness are proved for the global positive solution
to the system of stochastic differential equations describing a two-species
mutualism model disturbed by the white noise, the centered and non-centered Poisson
noises. We obtain sufficient conditions for stochastic ultimate boundedness,
stochastic permanence, nonpersistence in the mean, strong persistence
in the mean and extinction of the solution to the considered system.
\end{abstract}
\begin{keywords}
\kwd{Stochastic mutualism model}
\kwd{global solution}
\kwd{stochastic ultimate boundedness}
\kwd{stochastic permanence}
\kwd{extinction}
\kwd{nonpersistence in the mean}
\kwd{strong persistence in the mean}
\end{keywords}

\begin{keywords}[MSC2010]%
\kwd{92D25}
\kwd{60H10}
\kwd{60H30}
\end{keywords}

\received{\sday{3} \smonth{12} \syear{2019}}
\revised{\sday{21} \smonth{2} \syear{2020}}
\accepted{\sday{21} \smonth{2} \syear{2020}}
\publishedonline{\sday{3} \smonth{3} \syear{2020}}

\end{frontmatter}

\section{Introduction}
\label{sec1}

The construction of the mutualism model and its properties are presented
in K. Gopalsamy \cite{Gopal}. Mutualism occurs when one species provides
some benefit in exchange for another benefit. A deterministic two-species
mutualism model is described by the system
\begin{align}
\frac{dN_{1}(t)}{dt}=r_{1}(t)N_{1}(t)\left [
\frac{K_{1}(t)+\alpha _{1}(t)N_{2}(t)}{1+N_{2}(t)}-N_{1}(t)\right ],
\nonumber
\\
\frac{dN_{2}(t)}{dt}=r_{2}(t)N_{2}(t)\left [
\frac{K_{2}(t)+\alpha _{2}(t)N_{1}(t)}{1+N_{1}(t)}-N_{2}(t)\right ],
\nonumber
\end{align}
where $N_{1}(t)$ and $N_{2}(t)$ denote the population densities of each species
at time $t$, $r_{i}(t)>0$, $i=1,2$, denotes the intrinsic growth rate of species
$N_{i},i=1,2$, and $\alpha _{i}(t)>K_{i}(t)>0$, $i=1,2$. The carrying capacity
of species $N_{i}(t)$ is $K_{i}(t)$, $i=1,2$, in the absence of other species.
In the paper by Hong Qiu, Jingliang Lv and Ke Wang \cite{Qiu}
the stochastic mutualism model\index{stochastic ! mutualism model} of the form
%
\begin{align}
\label{eq1}
dx(t)=x(t)\left [\frac{a_{1}(t)+a_{2}(t)y(t)}{1+y(t)}-c_{1}(t)x(t)
\right ]+\sigma _{1}(t)x(t)dw_{1}(t),
\nonumber
\\
dy(t)=y(t)\left [\frac{b_{1}(t)+b_{2}(t)x(t)}{1+x(t)}-c_{2}(t)y(t)
\right ]+\sigma _{2}(t)y(t)dw_{2}(t)
\end{align}
is considered, where $a_{i}(t), b_{i}(t), c_{i}(t), \sigma _{i}(t), i =1,2$, are all positive,
continuous and bounded functions on $[0,+\infty )$, and
$w_{1}(t), w_{2}(t)$ are independent Wiener processes. The authors show
that the stochastic system \textup{(\ref{eq1})} has a unique global (no explosion
in a finite time) solution for any positive initial value and that this
stochastic model is stochastically ultimately bounded. The sufficient conditions
for stochastic permanence\index{stochastic ! permanence} and persistence in the mean of the solution to
the system \textup{(\ref{eq1})} are obtained.

Population systems may suffer abrupt environmental perturbations,\index{abrupt environmental perturbations} such
as epidemics, fires, earthquakes, etc. So it is natural to introduce Poisson
noises into the population model for describing such discontinuous systems.
In the paper by Mei Li, Hongjun Gao and Binjun Wang \cite{Li} the authors
consider the stochastic mutualism model\index{stochastic ! mutualism model} with the white and centered Poisson
noises:
\begin{align}
dx(t)&=x(t^{-})\left [\rule{0pt}{20pt}\left (r_{1}(t)-
\frac{b_{1}(t)x(t)}{K_{1}(t)+y(t)}-\varepsilon _{1}(t)x(t)\right )dt
\right .
\nonumber
\\
&+\alpha _{1}(t)dw_{1}(t)+\left .\int \limits _{\mathbb{Y}}\gamma _{1}(t,z)
\tilde{\nu }(dt,dz)\right ],
\nonumber
\\
dy(t)&=y(t^{-})\left [\rule{0pt}{20pt}\left (r_{2}(t)-
\frac{b_{2}(t)y(t)}{K_{2}(t)+x(t)}-\varepsilon _{2}(t)y(t)\right )dt
\right .
\nonumber
\\
&+\alpha _{2}(t)dw_{2}(t)+\left .\int \limits _{\mathbb{Y}}\gamma _{2}(t,z)
\tilde{\nu }(dt,dz)\right ],
\nonumber
\end{align}
where $x(t^{-})$, $y(t^{-})$ are the left limit of $x(t)$ and $y(t)$ respectively,
$r_{i}(t)$, $b_{i}(t)$, $K_{i}(t)$, $\alpha _{i}(t)$, $i=1,2$, are all positive,
continuous and bounded functions, $\mathbb{Y}$ is measurable subset of
$(0,+\infty )$, $w_{i}(t)$, $i=1,2$, are independent standard\index{independent standard} one-dimensional
Wiener processes, $\tilde{\nu }(t,A)=\nu (t,A)-t\Pi (A)$ is the centered
Poisson measure independent on $w_{i}(t)$, $i=1,2$,
$E[\nu (t,A)]=t\Pi (A)$, $\Pi (\mathbb{Y})<\infty $,
$\gamma _{i}(t,z),i=1,2$, are random, measurable, bounded, continuous in
$t$. The global existence and uniqueness of the positive solution to this
problem are proved. The sufficient conditions of stochastic boundedness,
stochastic permanence,\index{stochastic ! permanence} persistence\index{persistence} in the mean and extinction\index{extinction} of the solution
are obtained.

In this paper, we consider the stochastic mutualism model\index{stochastic ! mutualism model} with jumps generated
by centered and noncentered Poisson measures. So, we take into account
not only ``small'' jumps, corresponding to the centered Poisson measure,
but also ``large'' jumps, corresponding to the noncentered Poisson
measure. This model is driven by the system of stochastic differential
equations\index{stochastic ! differential equations}
%
\begin{align}
\label{eq2}
dx_{i}(t)=x_{i}(t)\left [
\frac{a_{i1}(t)+a_{i2}(t)x_{3-i}(t)}{1+x_{3-i}(t)}-c_{i}(t)x_{i}(t)
\right ]dt+\sigma _{i}(t)x_{i}(t)dw_{i}(t)
\nonumber
\\
+\int \limits _{\mathbb{R}}\gamma _{i}(t,z)x_{i}(t)\tilde{\nu }_{1}(dt,dz)+
\int \limits _{\mathbb{R}}\delta _{i}(t,z)x_{i}(t)\nu _{2}(dt,dz),
\nonumber
\\
x_{i}(0)=x_{i0}>0,\quad  i=1,2\xch{,}{.}
\end{align}
where $w_{i}(t), i=1,2$, are independent standard\index{independent standard} one-dimensional Wiener
processes, $\tilde{\nu }_{1}(t,A)=\nu _{1}(t,A)-t\Pi _{1}(A)$,
$\nu _{i}(t,A), i=1,2$, are independent Poisson measures,\index{independent Poisson measures} which are independent
on $w_{i}(t),i=1,2$, $E[\nu _{i}(t,A)]=t\Pi _{i}(A)$, $i=1,2$,
$\Pi _{i}(A), i=1,2$, are  finite measures\index{finite measures} on the Borel sets $A$ in
$\mathbb{R}$.

To the best of our knowledge, there are no papers devoted to the dynamical
properties of the stochastic mutualism model\index{stochastic ! mutualism model} \textup{(\ref{eq2})}, even in the
case of the centered Poisson noise. It is worth noting that the impact of the centered
and noncentered Poisson noises to the stochastic nonautonomous logistic
model is studied in the papers by O.D. Borysenko and D.O. Borysenko
\cite{Bor1,Bor2}.

In the following we will use the notations
$X(t)=(x_{1}(t),x_{2}(t))$, $X_{0}=(x_{10},x_{20})$,
$|X(t)|=\sqrt{x_{1}^{2}(t)+x_{2}^{2}(t)}$,
$\mathbb{R}^{2}_{+}=\{X\in \mathbb{R}^{2}:\ x_{1}>0,x_{2}>0\}$,
%
\begin{equation}
\beta _{i}(t)=\sigma ^{2}_{i}(t)/2+\int \limits _{\mathbb{R}}[
\gamma _{i}(t,z)-\ln (1+\gamma _{i}(t,z))]\Pi _{1}(dz)-\int \limits _{
\mathbb{R}}\ln (1+\delta _{i}(t,z))]\Pi _{2}(dz),
\nonumber
\end{equation}
$i=1,2$. For the bounded, continuous functions
$f_{i}(t), t\in [0,+\infty )$, $i=1,2$, let us denote
\begin{align}
f_{i\sup }=\sup _{t\ge 0}f_{i}(t),\qquad  f_{i\inf }=\inf _{t\ge 0}f_{i}(t),\quad  i=1,2,
\nonumber
\\
f_{\max }=\max \{f_{1\sup },f_{2\sup }\},\qquad  f_{\min }=\min \{f_{1\inf },f_{2
\inf }\}.
\nonumber
\end{align}

We will prove that system \textup{(\ref{eq2})} has a unique, positive, global (no explosion
in a finite time) solution for any positive initial value, and that this
solution is stochastically ultimate bounded. The sufficient conditions
for stochastic permanence,\index{stochastic ! permanence} nonpersistence in the mean, strong persistence
in the mean and extinction\index{extinction} of solution are derived.

The rest of this paper is organized as follows. In Section~\ref{sec2}, we prove
the existence of the unique global positive solution to the system
\textup{(\ref{eq2})}. In Section~\ref{sec3}, we prove the stochastic ultimate boundedness
of the solution to the system \textup{(\ref{eq2})}. In Section~\ref{sec4}, we obtain
conditions under which the solution to the system \textup{(\ref{eq2})} is stochastically
permanent,\index{stochastically permanent} and in Section~\ref{sec5} the sufficient conditions for nonpersistence
in the mean, strong persistence in the mean and extinction\index{extinction} of the solution
are obtained.

\section{Existence of the global solution}
\label{sec2}

Let $(\Omega ,{\mathcal{F}},\mathrm{P})$ be a probability space,
$w_{i}(t), i=1$, $2, t\ge 0$, are independent standard\index{independent standard} one-dimensional Wiener
processes on $(\Omega ,{\mathcal{F}},\mathrm{P})$, and
$\nu _{i}(t,A), i=1,2$, are independent Poisson measures\index{independent Poisson measures} defined on
$(\Omega ,{\mathcal{F}},\mathrm{P})$ independent on $w_{i}(t), i=1,2$. Here
$\mathrm{E}[\nu _{i}(t,A)]=t\Pi _{i}(A)$, $i=1,2$,
$\tilde{\nu }_{i}(t,A)=\nu _{i}(t,A)-t\Pi _{i}(A)$, $i=1,2$,
$\Pi _{i}(\cdot ), i=1,2$, are finite measures\index{finite measures} on the Borel sets in
$\mathbb{R}$. On the probability space
$(\Omega ,{\mathcal{F}},\mathrm{P})$ we consider an increasing, right-continuous
family of complete sub-$\sigma $-algebras
$\{{\mathcal{F}}_{t}\}_{t\ge 0}$, where
${\mathcal{F}}_{t}=\sigma \{w_{i}(s),\nu _{i}(s,A), s\le t,i=1,2\}$.

We need the following assumption.

\begin{assumption}
\label{ass1}
It is assumed that $a_{ij}(t),i,j=1,2$,
$c_{i}(t), \sigma _{i}(t), i=1,2$, are bounded, continuous in $t$ functions,
$a_{ij}(t)>0$, $i,j=1,2$, $c_{\min }>0$,
$\gamma _{i}(t,z), \delta _{i}(t,z)$, $i=1,2$, are continuous in $t$ functions
and $\ln (1+\gamma _{i}(t,z)),\ln (1+\delta _{i}(t,z))$, $i=1,2$, are bounded,
$\Pi _{i}(\mathbb{R})<\infty $, $i=1,2$.%
\end{assumption}

\begin{thm}
\label{thm1}
Let Assumption \ref{ass1} be fulfilled. Then there exists a unique global
solution $X(t)$ to the system \textup{(\ref{eq2})} for any initial value
$X(0)=X_{0}>0$, and $\mathrm{P}\{X(t)\in \mathbb{R}^{2}_{+}\}=\nobreak 1$.
\end{thm}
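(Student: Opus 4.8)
The plan is to prove existence and uniqueness of a global positive solution via the standard localization-and-Lyapunov technique used throughout the stochastic population dynamics literature. The coefficients in \eqref{eq2} are locally Lipschitz on $\mathbb{R}^2_+$ (the drift involves the smooth rational terms $\frac{a_{i1}(t)+a_{i2}(t)x_{3-i}}{1+x_{3-i}}$ and the quadratic $c_i(t)x_i^2$, and the jump coefficients are $x_i$ times bounded functions), so by the classical existence theorem for SDEs with jumps there is a unique local solution on $[0,\tau_e)$, where $\tau_e$ is the explosion time. The entire content of the theorem is therefore to show $\tau_e=+\infty$ a.s., and simultaneously that the solution never leaves $\mathbb{R}^2_+$.

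First I would reduce the problem to a logarithmic transformation. The multiplicative structure of both the noise terms ($\sigma_i(t)x_i\,dw_i$, $\gamma_i x_i\,\tilde\nu_1$, $\delta_i x_i\,\nu_2$) and the requirement $\ln(1+\gamma_i),\ln(1+\delta_i)$ bounded in Assumption~\ref{ass1} strongly suggest working with $u_i(t)=\ln x_i(t)$, applying the It\^o formula for processes with jumps. Under this change of variables the factor $1+\gamma_i(t,z)$ (respectively $1+\delta_i(t,z)$) multiplying $x_i$ at a jump becomes additive in $u_i$, and boundedness of $\ln(1+\gamma_i),\ln(1+\delta_i)$ guarantees that $x_i$ stays strictly positive and finite across every jump; this is precisely what secures $\pr\{X(t)\in\mathbb{R}^2_+\}=1$ and rules out the solution hitting the boundary. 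So positivity is essentially automatic from the transformation, and the real work concentrates on non-explosion.

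To exclude explosion I would introduce stopping times $\tau_k=\inf\{t\ge0:\ x_1(t)\notin(1/k,k)\ \text{or}\ x_2(t)\notin(1/k,k)\}$, observe $\tau_k\uparrow\tau_e$, and aim to show $\pr\{\tau_e<\infty\}=0$ by contradiction. The standard device is a nonnegative Lyapunov function; the natural choice here is
\begin{equation}
V(X)=\sum_{i=1}^{2}\bigl(x_i-1-\ln x_i\bigr),
\nonumber
\end{equation}
which is nonnegative on $\mathbb{R}^2_+$ and tends to $+\infty$ as any $x_i\to0^+$ or $x_i\to+\infty$. Applying the It\^o formula for jump processes to $V(X(t))$ and taking expectations kills the martingale parts (the $dw_i$ and $\tilde\nu_1$ integrals). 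The decisive point is that the generator $LV$ must be bounded above by $C(1+V(X))$ for some constant $C$: the $-c_i(t)x_i^2$ term from the drift dominates the growth and makes the $x_i^2$ contributions negative, the mutualism quotient is bounded because $\frac{a_{i1}+a_{i2}x_{3-i}}{1+x_{3-i}}\le\max\{a_{i1\sup},a_{i2\sup}\}$, and the jump contributions $\int_{\mathbb{R}}[\,x_i\gamma_i-\ln(1+\gamma_i)\,]\Pi_1(dz)$ and the analogous $\delta_i$ term are controlled using finiteness of $\Pi_i(\mathbb{R})$ together with the boundedness hypotheses.

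The main obstacle I expect is verifying this one-sided linear growth bound on $LV$ uniformly in $t$, and in particular handling the non-centered Poisson integral $\int_{\mathbb{R}}\delta_i(t,z)x_i\,\nu_2(dt,dz)$, whose compensator contributes a term linear in $x_i$ to $LV$ rather than being annihilated as a martingale; one must check that this positive linear-in-$x_i$ term is absorbed by the negative quadratic $-c_i(t)x_i^2$ with $c_{\min}>0$. Once $LV\le C(1+V)$ is established, the conclusion is routine: Gronwall's inequality gives $\E[V(X(t\wedge\tau_k))]\le(V(X_0)+Ct)e^{Ct}$, and since on $\{\tau_k\le T\}$ the value $V(X(\tau_k))$ is at least $\min\{k-1-\ln k,\ 1/k-1+\ln k\}\to\infty$, letting $k\to\infty$ forces $\pr\{\tau_e\le T\}=0$ for every $T$, hence $\tau_e=\infty$ a.s., completing the proof.
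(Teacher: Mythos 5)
Your proposal is correct and follows essentially the same route as the paper: a logarithmic change of variables to obtain a unique positive local solution, the Lyapunov function $V(X)=\sum_{i=1}^{2}(x_i-1-\ln x_i)$ with the localization stopping times, and the observation that $-c_{\min}x_i^2$ absorbs the linear-in-$x_i$ terms (including the compensator of the non-centered Poisson integral). The only cosmetic difference is that the paper bounds the generator by a constant $K$ rather than by $C(1+V)$, so it concludes directly without Gronwall's inequality.
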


\begin{proof}
Let us consider the system of stochastic differential equations\index{stochastic ! differential equations}
%
\begin{align}
\label{eq3}
dv_{i}(t)=\left [
\frac{a_{i1}(t)+a_{i2}(t)\exp \{v_{3-i}(t)\}}{1+\exp \{v_{3-i}(t)\}}-c_{i}(t)
\exp \{v_{i}(t)\}-\beta _{i}(t)\right ]dt
\nonumber
\\
+\sigma _{i}(t)dw_{i}(t) +\int \limits _{\mathbb{R}}\ln (1+\gamma _{i}(t,z))
\tilde{\nu }_{1}(dt,dz)+\int \limits _{\mathbb{R}}\ln (1+\delta _{i}(t,z))
\tilde{\nu }_{2}(dt,dz),
\nonumber
\\
v_{i}(0)=\ln x_{i0},\quad  i=1,2.
\end{align}

The coefficients of  equation \textup{(\ref{eq3})} are locally Lipschitz continuous.\index{locally Lipschitz continuous}
Therefore, for any initial value $(v_{1}(0),v_{2}(0))$ there exists a unique
local solution $V(t)=(v_{1}(t),\allowbreak v_{2}(t))$ on $[0,\tau _{e})$, where
$\sup _{t<\tau _{e}}|V(t)|=+\infty $ (cf. Theorem 6, p.~246,~\cite{GikhSkor}).
So, from the It\^{o} formula\index{It\^{o} formula} we derive that the process
$X(t)=(\exp \{v_{1}(t)\},\exp \{v_{2}(t)\})$ is a unique, positive local
solution to the system (\ref{eq2}). To show this solution is global,
we need to show that $\tau _{e}=+\infty $ a.s. Let
$n_{0}\in \mathbb{N}$ be sufficiently large for
$x_{i0}\in [1/n_{0},n_{0}]$, $i=1,2$. For any $n\ge n_{0}$ we define the
stopping time
\begin{align}
\tau _{n}=\inf \left \{  t\in [0,\tau _{e}):\ X(t)\notin \left (
\frac{1}{n},n\right )\times \left (\frac{1}{n},n\right )\right \}  .
\nonumber
\end{align}
It is clear that $\tau _{n}$ is increasing as $n\to +\infty $. Set
$\tau _{\infty }=\lim _{n\to \infty }\tau _{n}$, whence
$\tau _{\infty }\le \tau _{e}$ a.s. If we prove that
$\tau _{\infty }=\infty $ a.s., then $\tau _{e}=\infty $ a.s. and
$X(t)\in \mathbb{R}^{2}_{+}$ a.s. for all $t\in [0,+\infty )$. So we need
to show that $\tau _{\infty }=\infty $ a.s. If this statement is false, there
are constants $T>0$ and $\varepsilon \in (0,1)$, such that
$\mathrm{P}\{\tau _{\infty }<T\}>\varepsilon $. Hence, there is
$n_{1}\ge n_{0}$ such that
%
\begin{align}
\label{eq4}
\mathrm{P}\{\tau _{n}<T\}>\varepsilon ,\quad \forall n\ge n_{1}.
\end{align}
For the nonnegative function
$V(X)=\sum \limits _{i=1}^{2}(x_{i}-1-\ln x_{i})$, $x_{i}>0$,
$i=1,2$, by the It\^{o} formula\index{It\^{o} formula} we have
%
\begin{align}
\label{eq5}
V(X(T\wedge \tau _{n}))=V(X_{0})+\sum _{i=1}^{2}\left \{  \int \limits _{0}^{T
\wedge \tau _{n}}\left [\rule{0pt}{20pt}(x_{i}(t)-1)\left (
\frac{a_{i1}(t)+a_{i2}(t)x_{3-i}(t)}{1+x_{3-i}(t)}\right .\right .
\right .
\nonumber
\\
\left .\left .-c_{i}(t)x_{i}(t)\rule{0pt}{14pt}\right )+ \beta _{i}(t)+
\int \limits _{\mathbb{R}}\delta _{i}(t,z)x_{i}(t)\Pi _{2}(dz)
\right ]dt
\nonumber
\\
+\int \limits _{0}^{T\wedge \tau _{n}}\!\!\!(x_{i}(t)-1)
\sigma _{i}(t)dw_{i}(t)\! +\!\int \limits _{0}^{T\wedge \tau _{n}}
\!\!\!\int \limits _{\mathbb{R}}\![\gamma _{i}(t,z)x_{i}(t)-\ln (1+
\gamma _{i}(t,z))]\tilde{\nu }_{1}(dt,dz)
\nonumber
\\
\left .+ \int \limits _{0}^{T\wedge \tau _{n}}\!\!\!\int \limits _{
\mathbb{R}}[\delta _{i}(t,z)x_{i}(t)-\ln (1+\delta _{i}(t,z))]
\tilde{\nu }_{2}(dt,dz)\right \}  .
\end{align}

Under the conditions of the theorem there is a constant $K>0$ such that
%
\begin{align}
\label{eq6}
\sum _{i=1}^{2}\left [(x_{i}-1)\left (
\frac{a_{i1}(t)+a_{i2}(t)x_{3-i}}{1+x_{3-i}}-c_{i}(t)x_{i}\right )+
\beta _{i}(t)\right .
\nonumber
\\
\left .+\int \limits _{\mathbb{R}}\delta _{i}(t,z)x_{i}\Pi _{2}(dz)
\right ]\le \sum _{i=1}^{2}\left [(a_{\max }+c_{\max })x_{i}-c_{\min }x_{i}^{2}+
\beta _{\max }
\nonumber
\right .
\\
\left .+ x_{i}\delta _{\max }\Pi _{2}(\mathbb{R})\right ]\le K,
\end{align}
where $a_{\max }=\max _{i,j=1,2}\{\sup _{t\ge 0}a_{ij}(t)\}$. From
\textup{(\ref{eq5})} and \textup{(\ref{eq6})} we have
\begin{align}
V(X(T\wedge \tau _{n}))\le V(X_{0})+K(T\wedge \tau _{n})+\sum _{i=1}^{2}
\left \{  \int \limits _{0}^{T\wedge \tau _{n}}(x_{i}(t)-1)\sigma _{i}(t)dw_{i}(t)
\right .
\nonumber
\\
+\left . \int \limits _{0}^{T\wedge \tau _{n}}\!\!\!\int \limits _{
\mathbb{R}}[\gamma _{i}(t,z)x_{i}(t)-\ln (1+\gamma _{i}(t,z))]
\tilde{\nu }_{1}(dt,dz)\right .
\nonumber
\\
+\left . \int \limits _{0}^{T\wedge \tau _{n}}\!\!\!\int \limits _{
\mathbb{R}}[\delta _{i}(t,z)x_{i}(t)-\ln (1+\delta _{i}(t,z))]
\tilde{\nu }_{2}(dt,dz)\right \}  .
\nonumber
\end{align}

Whence, taking expectations yields
%
\begin{align}
\label{eq7}
\mathrm{E}\left [V(X(T\wedge \tau _{n}))\right ]\le V(X_{0})+KT.
\end{align}

Set $\Omega _{n}=\{\tau _{n}\le T\}$ for $n\ge n_{1}$. Then by
\textup{(\ref{eq4})},
$\mathrm{P}(\Omega _{n})=\mathrm{P}\{\tau _{n}\le t\}>\varepsilon $,
$\forall n\ge n_{1}$. Note that for every $\omega \in \Omega _{n}$ there
is some $i$ such that $x_{i}(\tau _{n},\omega )$ equals either $n$ or
$1/n$. So
\begin{align}
V(X(\tau _{n}))\ge \min \{n-1-\ln n,\frac{1}{n}-1+\ln n\}.
\nonumber
\end{align}

It then follows from \textup{(\ref{eq7})} that
\begin{align}
V(X_{0})+KT\ge \mathrm{E}[\mathbf{1}_{\Omega _{n}}V(X(\tau _{n}))]
\ge \varepsilon \min \{n-1-\ln n,\frac{1}{n}-1+\ln n\},
\nonumber
\end{align}
where $\mathbf{1}_{\Omega _{n}}$ is the indicator function of
$\Omega _{n}$. Letting $n\to \infty $ leads to the contradiction
$\infty >V(X_{0})+KT=\infty $. This completes the proof of the theorem.
\end{proof}

\section{Stochastically ultimate boundedness}
\label{sec3}

\begin{defin}%
\label{def1}%
(\cite{LiMao}) The solution $X(t)$ to the system \textup{(\ref{eq2})} is said
to be stochastically ultimately bounded, if for any
$\varepsilon \in (0,1)$ there is a positive constant
$\chi =\chi (\varepsilon )>0$ such that for any initial value
$X_{0}\in \mathbb{R}^{2}_{+}$
this solution has the property
\begin{align}
\limsup _{t\to \infty }\mathrm{P}\{|X(t)|>\chi \}<\varepsilon .
\nonumber
\end{align}
\end{defin}

\begin{thm}%
\label{thm2}
Under Assumption \ref{ass1} the solution $X(t)$ to the system
\textup{(\ref{eq2})} is stochastically ultimately bounded for any initial value
$X_{0}\in \mathbb{R}^{2}_{+}$.
\end{thm}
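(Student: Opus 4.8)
The plan is to reduce the statement to a uniform-in-time bound on a moment of $|X(t)|$ and then invoke Markov's inequality. Concretely, I would show that there exist $p>0$ and a constant $\hat H>0$, independent of $t$, with $\limsup_{t\to\infty}\mathrm{E}|X(t)|^{p}\le \hat H$. Granting this, for $\chi>0$ Markov's inequality gives $\mathrm{P}\{|X(t)|>\chi\}=\mathrm{P}\{|X(t)|^{p}>\chi^{p}\}\le \mathrm{E}|X(t)|^{p}/\chi^{p}$, so $\limsup_{t\to\infty}\mathrm{P}\{|X(t)|>\chi\}\le \hat H/\chi^{p}$, and choosing $\chi=(\hat H/\eps)^{1/p}+1$ makes the right-hand side smaller than $\eps$, which is exactly Definition~\ref{def1}. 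The whole difficulty is therefore shifted to the moment estimate.

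For the moment bound I would use the Lyapunov function $V(X)=x_{1}^{p}+x_{2}^{p}$; the choice $p=1$ already suffices and it avoids the Itô second-order and centered-jump corrections, so I would present that case. Applying the It\^o formula for jump processes to $e^{t}V(X(t))$ on $[0,t\wedge\tau_{n}]$, where $\tau_{n}$ are the stopping times from the proof of Theorem~\ref{thm1}, the drift of $e^{t}V$ equals $e^{t}(V+\mathcal{L}V)$, where the generator acting on $x_{i}$ produces
\[
\mathcal{L}x_{i}=x_{i}\frac{a_{i1}(t)+a_{i2}(t)x_{3-i}}{1+x_{3-i}}-c_{i}(t)x_{i}^{2}+x_{i}\int_{\R}\delta_{i}(t,z)\Pi_{2}(dz),
\]
while the Wiener and compensated Poisson integrals contribute local martingales. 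The key structural point is that $\frac{a_{i1}(t)+a_{i2}(t)x_{3-i}}{1+x_{3-i}}\le a_{\max}$ uniformly in $x_{3-i}\ge 0$, and that $\int_{\R}\delta_{i}(t,z)\Pi_{2}(dz)$ is bounded because $\ln(1+\delta_{i})$ is bounded (hence so is $\delta_{i}$) and $\Pi_{2}(\R)<\infty$. Consequently the coefficient of $x_{i}$ is bounded by a constant, while the genuinely negative term $-c_{i}x_{i}^{2}\le -c_{\min}x_{i}^{2}$ (recall $c_{\min}>0$) dominates for large $x_{i}$.

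It follows that $V+\mathcal{L}V\le\sum_{i=1}^{2}\big[(1+a_{\max}+\delta_{\max}\Pi_{2}(\R))x_{i}-c_{\min}x_{i}^{2}\big]\le H$ for some finite constant $H$, since each summand is a downward parabola in $x_{i}$. Taking expectations kills the local-martingale terms (after localization by $\tau_{n}$) and yields $\mathrm{E}[e^{t\wedge\tau_{n}}V(X(t\wedge\tau_{n}))]\le V(X_{0})+H\,\mathrm{E}[e^{t\wedge\tau_{n}}-1]$; letting $n\to\infty$ using $\tau_{n}\to\infty$ from Theorem~\ref{thm1} together with Fatou's lemma gives $\mathrm{E}[V(X(t))]\le V(X_{0})e^{-t}+H$, whence $\limsup_{t\to\infty}\mathrm{E}[x_{1}(t)+x_{2}(t)]\le H$. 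Finally $|X(t)|=\sqrt{x_{1}^{2}+x_{2}^{2}}\le x_{1}+x_{2}$, so $\limsup_{t\to\infty}\mathrm{E}|X(t)|\le H=:\hat H$, completing the reduction.

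I expect the main obstacle to be the rigorous treatment of the jump terms rather than the algebra: one must verify that the compensated integrals against $\tilde\nu_{1}$ and $\tilde\nu_{2}$ are true local martingales with vanishing expectation after stopping, and that the compensator of the noncentered part contributes only the bounded quantity $\int_{\R}\delta_{i}(t,z)\Pi_{2}(dz)$. This is precisely where Assumption~\ref{ass1} (boundedness of $\ln(1+\gamma_{i})$ and $\ln(1+\delta_{i})$, finiteness of $\Pi_{i}$, and $c_{\min}>0$) is indispensable, both for the martingale property and for the uniform upper bound $H$ on $V+\mathcal{L}V$.
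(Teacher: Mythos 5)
Your proposal is correct and follows essentially the same route as the paper: apply the It\^{o} formula to $e^{t}x_{i}^{p}(t)$, bound the drift by a constant using the facts that $\frac{a_{i1}+a_{i2}x_{3-i}}{1+x_{3-i}}\le a_{\max}$ and that $-c_{\min}x_{i}^{2}$ dominates, localize by the stopping times $\tau_{n}$ from Theorem~\ref{thm1}, pass to the limit, and finish with the Chebyshev/Markov inequality. The only difference is that the paper carries out the moment estimate for arbitrary $p>0$ (using $|X|^{p}\le 2^{p/2}(x_{1}^{p}+x_{2}^{p})$) whereas you specialize to $p=1$, which is a harmless simplification since the definition only requires one such moment bound.
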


\begin{proof}
Let $\tau _{n}$ be the stopping time defined in Theorem
\ref{thm1}. Applying the It\^{o} formula\index{It\^{o} formula} to the process
$V(t,x_{i}(t))=e^{t}x_{i}^{p}(t)$, $i=1,2$, $p>0$, we obtain for
$i=1,2$ that
%
\begin{align}
\label{eq8}
V(t\wedge \tau _{n},x_{i}(t\wedge \tau _{n}))=x_{i0}^{p}+\int
\limits _{0}^{t\wedge \tau _{n}}e^{s}x_{i}^{p}(s)\left \{
\rule{0pt}{18pt}1+p\left [
\frac{a_{i1}(s)+a_{i2}(s)x_{3-i}(s)}{1+x_{3-i}(s)}\right .\right .
\nonumber
\\
\left .\left .- c_{i}(s)x_{i}(s)\right ]\!+\!
\frac{p(p-1)\sigma _{i}^{2}(s)}{2}+\int \limits _{\mathbb{R}}\!\!\!
\left [(1+\gamma _{i}(s,z))^{p}\!-\!1\!-\!p\gamma _{i}(s,z)\right ]
\Pi _{1}(dz)\!\right .
\nonumber
\\
\left . +\!\int \limits _{\mathbb{R}}\!\!\!\left [(1+\delta _{i}(s,z))^{p}
\!-\!1\right ]\Pi _{2}(dz) \right \}  ds +\int \limits _{0}^{t\wedge
\tau _{n}}\!\!\!p e^{s}x_{i}^{p}(s)\sigma _{i}(s)dw_{i}(s)
\nonumber
\\
+\int \limits _{0}^{t\wedge \tau _{n}}\!\!\!\int \limits _{
\mathbb{R}}e^{s}x_{i}^{p}(s)\left [(1+\gamma _{i}(s,z))^{p}-1\right ]
\tilde{\nu }_{1}(ds,dz)
\nonumber
\\
+\int \limits _{0}^{t\wedge \tau _{n}}\!\!\!\int \limits _{
\mathbb{R}}e^{s}x_{i}^{p}(s)\left [(1+\delta _{i}(s,z))^{p}-1\right ]
\tilde{\nu }_{2}(ds,dz).
\end{align}

Under Assumption \ref{ass1} there is a constant $K_{i}(p)>0$ such that
%
\begin{align}
\label{eq9}
e^{s}x_{i}^{p}\left \{  \rule{0pt}{18pt}1+p\left [
\frac{a_{i1}(s)+a_{i2}(s)x_{3-i}}{1+x_{3-i}}- c_{i}(s)x_{i}\right ]\!+
\!\frac{p(p-1)\sigma _{i}^{2}(s)}{2}+\right .
\nonumber
\\
\left .+\!\int \limits _{\mathbb{R}}\!\!\left [(1+\gamma _{i}(s,z))^{p}
\!-\!1\!-\!p\gamma _{i}(s,z)\right ]\Pi _{1}(dz) +\!\int \limits _{
\mathbb{R}}\!\!\left [(1+\delta _{i}(s,z))^{p}\!-\!1\right ]\Pi _{2}(dz)
\right \}
\nonumber
\\
\le K_{i}(p)e^{s}.
\end{align}
From \textup{(\ref{eq8})} and \textup{(\ref{eq9})}, taking expectations, we obtain
\begin{align}
\mathrm{E}[V(t\wedge \tau _{n},x_{i}(t\wedge \tau _{n}))]\le x_{i0}^{p}+K(p)
\mathrm{E}[e^{t\wedge \tau _{n}}]\le x_{i0}^{p}+K_{i}(p)e^{t}.
\nonumber
\end{align}

Letting $n\to \infty $ leads to the estimate
%
\begin{align}
\label{eq10}
e^{t}\mathrm{E}[x_{i}^{p}(t)]\le x_{i0}^{p}+e^{t}K_{i}(p).
\end{align}
So we have for $i=1,2$ that
%
\begin{align}
\label{eq11}
\lim \sup _{t\to \infty }E[x_{i}^{p}(t)]\le K_{i}(p).
\end{align}
For $X=(x_{1},x_{2})\in \mathbb{R}^{2}_{+}$ we have
$|X|^{p}\le 2^{p/2}(x_{1}^{p}+x_{2}^{p})$, therefore, from
\textup{(\ref{eq11})}
$\lim \sup _{t\to \infty }E[|X(t)|^{p}]\le L(p)=2^{p/2}(K_{1}(p)+K_{2}(p))$.
Let $\chi >(L(p)/\varepsilon )^{1/p}$, $p>0$,
$\forall \varepsilon \in (0,1)$. Then applying the Chebyshev inequality
yields
\begin{equation*}
\limsup _{t\to \infty }\mathrm{P}\{|X(t)|>\chi \}\le
\frac{1}{\chi ^{p}}\lim \sup _{t\to \infty }E[|X(t)|^{p}]\le
\frac{L(p)}{\chi ^{p}}<\varepsilon .
\nonumber
\qedhere
\end{equation*}
\end{proof}

\section{Stochastic permanence\index{stochastic ! permanence}}
\label{sec4}

The property of stochastic permanence\index{stochastic ! permanence} is important since it means the long-time
survival in a population dynamics.

\begin{defin}%
(\cite{Li}) The solution $X(t)$ to the system \textup{(\ref{eq2})} is said to
be stochastically permanent\index{stochastically permanent} if for any $\varepsilon >0$ there are positive
constants $H=H(\varepsilon )$, $h=h(\varepsilon )$ such that
\begin{align}
\liminf \limits _{t\to \infty }\mathrm{P}\{x_{i}(t)\le H\}\ge 1-
\varepsilon ,\qquad \liminf \limits _{t\to \infty }\mathrm{P}\{x_{i}(t)
\ge h\}\ge 1-\varepsilon ,
\nonumber
\end{align}
for $i=1,2$ and for any inial value $X_{0}\in \mathbb{R}^{2}_{+}$.
\end{defin}

\begin{thm}
Let Assumption \ref{ass1} be fulfilled. If
\begin{align}
\min _{i=1,2}\inf _{t\ge 0}(a_{i\min }(t)-\beta _{i}(t))>0,\quad  \text{where}
\ a_{i\min }(t)=\min _{j=1,2}a_{ij}(t),\quad  i=1,2,
\nonumber
\end{align}
then the solution $X(t)$ to the system \textup{(\ref{eq2})} with the initial condition
$X_{0}\in \mathbb{R}^{2}_{+}$ is stochastically permanent.\index{stochastically permanent}
\end{thm}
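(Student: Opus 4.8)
The plan is to establish the two one-sided bounds in the definition separately. The upper bound is an immediate consequence of the stochastic ultimate boundedness already proved in Theorem~\ref{thm2}: since $\limsup_{t\to\infty}\E[x_i^p(t)]\le K_i(p)$ for every $p>0$, the Chebyshev inequality gives $\limsup_{t\to\infty}\pr\{x_i(t)>H\}\le K_i(p)/H^p$, so choosing $H$ large enough yields $\liminf_{t\to\infty}\pr\{x_i(t)\le H\}\ge 1-\varepsilon$. All the real work therefore goes into the lower bound, i.e. into preventing $x_i(t)$ from approaching $0$. The decisive structural observation is that for every $t$ and every $x_{3-i}>0$,
\[
\frac{a_{i1}(t)+a_{i2}(t)x_{3-i}}{1+x_{3-i}}\ge\min\{a_{i1}(t),a_{i2}(t)\}=a_{i\min}(t),
\]
because the map $x\mapsto (a_{i1}+a_{i2}x)/(1+x)$ is monotone and takes values between $a_{i1}$ and $a_{i2}$. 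This lets me bound the per-species dynamics from below by a logistic-type equation with net growth rate $a_{i\min}(t)$ and thus decouple the analysis of the two coordinates.

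I would then pass to $y_i(t)=1/x_i(t)$ and apply the It\^{o} formula to $(1+y_i)^\theta$ with a small $\theta>0$ to be fixed, and to $e^{\kappa t}(1+y_i)^\theta$ with a small $\kappa>0$. Writing out the generator $\mathcal{L}$ and using the lower bound above in the drift term $-g_i(t)y_i$, where $g_i$ is the growth fraction, together with Assumption~\ref{ass1} and the finiteness of $\Pi_1,\Pi_2$ to control the diffusion and the jump-compensator integrals, I aim to prove an estimate of the form
\[
\mathcal{L}\bigl[e^{\kappa t}(1+y_i)^\theta\bigr]\le C\,e^{\kappa t}
\]
for a constant $C$. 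The choice $(1+y_i)^\theta$ rather than $x_i^{-\theta}$ is what keeps the Lyapunov function bounded when $x_i$ is large, so the $c_i x_i$ term causes no trouble, while the small power $\theta$ is what makes the remaining coefficient of $(1+y_i)^\theta$ strictly negative.

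Integrating, localizing with the stopping times $\tau_n$ of Theorem~\ref{thm1} so that the stochastic integrals are martingales, taking expectations and letting $n\to\infty$ then gives $\limsup_{t\to\infty}\E[(1+y_i(t))^\theta]\le C/\kappa=:M$, hence $\limsup_{t\to\infty}\E[x_i^{-\theta}(t)]\le M$. A second application of the Chebyshev inequality, $\pr\{x_i(t)<h\}=\pr\{x_i^{-\theta}(t)>h^{-\theta}\}\le h^\theta\,\E[x_i^{-\theta}(t)]$, finishes the lower bound after choosing $h$ small enough that $h^\theta M<\varepsilon$.

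The main obstacle is the generator estimate, and specifically the verification that the coefficient multiplying $(1+y_i)^\theta$ can be made negative. Expanding in $\theta$, the combination coming from the drift, the $\frac{1}{2}\theta(\theta+1)\sigma_i^2$ It\^{o} term, the centered-jump term $\int[(1+\gamma_i)^{-\theta}-1+\theta\gamma_i]\Pi_1(dz)$ and the noncentered-jump term $\int[(1+\delta_i)^{-\theta}-1]\Pi_2(dz)$ equals $-\theta\bigl(a_{i\min}(t)-\beta_i(t)\bigr)+O(\theta^2)$; this is exactly where the hypothesis $\inf_{t}(a_{i\min}(t)-\beta_i(t))>0$ is used, and it forces $\theta$ and then $\kappa$ to be chosen small enough that the $O(\theta^2)$ remainder cannot destroy the sign. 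Keeping careful track of this expansion, and of the fact that the naive drift-only combination $\int \gamma_i^2/(1+\gamma_i)\,\Pi_1(dz)$ is corrected by the nonlinear jump contributions into precisely the $\beta_i$ appearing in the statement, is the delicate part of the argument.
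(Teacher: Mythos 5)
Your proposal is correct and follows essentially the same route as the paper: the upper bound via the $p$-th moment estimate of Theorem~\ref{thm2} and Chebyshev, and the lower bound via the Lyapunov function $e^{\lambda t}(1+1/x_i(t))^\theta$ with $\theta$ small, using the pointwise bound $\frac{a_{i1}(t)+a_{i2}(t)x_{3-i}}{1+x_{3-i}}\ge a_{i\min}(t)$ and the limit as $\theta\to 0+$ of the jump-compensator terms to recover exactly $a_{i\min}(t)-\beta_i(t)$ in the leading coefficient. The only cosmetic difference is that the paper justifies the vanishing expectation of the stochastic integrals by explicit second-moment bounds rather than by localization, which changes nothing essential.
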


\begin{proof}

For the process $U_{i}(t)=1/x_{i}(t)$, $i=1,2$, by the It\^{o} formula\index{It\^{o} formula} we
have
\begin{align}
U_{i}(t)=U_{i}(0)+\int \limits _{0}^{t} U_{i}(s) \left [
\rule{0pt}{20pt}-\frac{a_{i1}(s)+a_{i2}(s)x_{3-i}(s)}{1+x_{3-i}(s)}+c_{i}(s)x_{i}(s)+
\sigma _{i}^{2}(s)\right .
\nonumber
\\
\left .+\int \limits _{\mathbb{R}}
\frac{\gamma _{i}^{2}(s,z)}{1+\gamma _{i}(s,z)}\Pi _{1}(dz)\right ]ds -
\int \limits _{0}^{t}U_{i}(s)\sigma _{i}(s)dw_{i}(s)
\nonumber
\\
-\int \limits _{0}^{t}\!\!\!\!\int \limits _{\mathbb{R}}U_{i}(s)
\frac{\gamma _{i}(s,z)}{1+\gamma _{i}(s,z)}\tilde{\nu }_{1}(ds,dz)-
\int \limits _{0}^{t}\!\!\!\int \limits _{\mathbb{R}}U_{i}(s)
\frac{\delta _{i}(s,z)}{1+\delta _{i}(s,z)}\nu _{2}(ds,dz).
\nonumber
\end{align}

Then by the It\^{o} formula\index{It\^{o} formula} we derive for $0<\theta <1$:
\begin{align}
(1+U_{i}(t))^{\theta }=(1+U_{i}(0))^{\theta }+\int \limits _{0}^{t}
\theta (1+U_{i}(s))^{\theta -2}\left \{  \rule{0pt}{20pt}(1+U_{i}(s))U_{i}(s)
\right .
\nonumber
\\
\times \left [-\frac{a_{i1}(s)+a_{i2}(s)x_{3-i}(s)}{1+x_{3-i}(s)}+c_{i}(s)x_{i}(s)+
\sigma _{i}^{2}(s)
\phantom{\int \limits _{\mathbb{R}}}
\right .
\nonumber
\\
\left . +\int \limits _{\mathbb{R}}
\frac{\gamma _{i}^{2}(s,z)}{1+\gamma _{i}(s,z)}\Pi _{1}(dz)\right ] +
\frac{\theta -1}{2}U_{i}^{2}(s)\sigma _{i}^{2}(s)
\nonumber
\\
+ \frac{1}{\theta }\int \limits _{\mathbb{R}}\left [(1+U_{i}(s))^{2}
\left (\left (
\frac{1+U_{i}(s)+\gamma _{i}(s,z)}{(1+\gamma _{i}(s,z))(1+U_{i}(s))}
\right )^{\theta }-1\right )\right .
\nonumber
\\
+\left . \theta (1+U_{i}(s))
\frac{U_{i}(s)\gamma _{i}(s,z)}{1+\gamma _{i}(s,z)}\right ]\Pi _{1}(dz)
\nonumber
\end{align}
%
\begin{align}
\label{eq12}
\left . +\frac{1}{\theta }\int \limits _{\mathbb{R}}(1+U_{i}(s))^{2}
\left [\left (
\frac{1+U_{i}(s)+\delta _{i}(s,z)}{(1+\delta _{i}(s,z))(1+U_{i}(s))}
\right )^{\theta }-1\right ]\Pi _{2}(dz)\right \}  ds
\nonumber
\\
-\int \limits _{0}^{t}\theta (1+U_{i}(s))^{\theta -1}U_{i}(s)\sigma _{i}(s)dw_{i}(s)
\nonumber
\\
+ \int \limits _{0}^{t}\!\!\int \limits _{\mathbb{R}}\left [\left (1+
\frac{U_{i}(s)}{1+\gamma _{i}(s,z)}\right )^{\theta }-(1+U_{i}(s))^{\theta }\right ]\tilde{\nu }_{1}(ds,dz)
\nonumber
\\
+ \int \limits _{0}^{t}\!\!\int \limits _{\mathbb{R}}\left [\left (1+
\frac{U_{i}(s)}{1+\delta _{i}(s,z)}\right )^{\theta }-(1+U_{i}(s))^{\theta }\right ]\tilde{\nu }_{2}(ds,dz)= (1+U_{i}(0))^{\theta }
\nonumber
\\
+\int \limits _{0}^{t} \theta (1+U_{i}(s))^{\theta -2}J(s)ds-I_{1,\mathrm{stoch}}(t)+I_{2,\mathrm{stoch}}(t)+I_{3,\mathrm{stoch}}(t),
\end{align}
where $I_{i,\mathrm{stoch}}(t), i=\overline{1,3}$, are corresponding stochastic integrals\index{stochastic ! integrals}
in \textup{(\ref{eq12})}. Under the Assumption \ref{ass1} there exist constants
$|K_{1}(\theta )|<\infty $, $|K_{2}(\theta )|<\infty $ such that for the
process $J(t)$ we have the estimate
\begin{align}
J(t)\le (1+U_{i}(t))U_{i}(t)\left [-a_{i\min }(t)+c_{\max }U_{i}^{-1}(t)
+\sigma _{i}^{2}(t)\!\!
\phantom{\int \limits _{\mathbb{R}}}
\right .
\nonumber
\\
\left .+\int \limits _{\mathbb{R}}
\frac{\gamma _{i}^{2}(s,z)}{1+\gamma _{i}(s,z)}\Pi _{1}(dz)\right ] +
\frac{\theta -1}{2}U_{i}^{2}(s)\sigma _{i}^{2}(s)
\nonumber
\\
+\frac{1}{\theta }\int \limits _{\mathbb{R}}\left [(1+U_{i}(s))^{2}
\left (\left (\frac{1}{1+\gamma _{i}(s,z)}+ \frac{1}{1+U_{i}(s)}
\right )^{\theta }-1\right )\right .
\nonumber
\\
+\left . \theta (1+U_{i}(s))
\frac{U_{i}(s)\gamma _{i}(s,z)}{1+\gamma _{i}(s,z)}\right ]\Pi _{1}(dz)
\nonumber
\\
+\frac{1}{\theta }\int \limits _{\mathbb{R}}(1+U_{i}(s))^{2}\left [
\left (\frac{1}{1+\delta _{i}(s,z)}+ \frac{1}{1+U_{i}(s)}\right )^{
\theta }-1\right ]\Pi _{2}(dz)
\nonumber
\\
\le U_{i}^{2}(t)\left [-a_{i\min }(t)+\frac{\sigma _{i}^{2}(t)}{2} +
\int \limits _{\mathbb{R}}\gamma _{i}(t,z)\Pi _{1}(dz)+
\frac{\theta }{2}\sigma _{i}^{2}(t)\right .
\nonumber
\\
\left .+\frac{1}{\theta }\int \limits _{\mathbb{R}}[(1+\gamma _{i}(t,z))^{-
\theta }-1]\Pi _{1}(dz) +\frac{1}{\theta }\int \limits _{\mathbb{R}}[(1+
\delta _{i}(t,z))^{-\theta }-1]\Pi _{2}(dz)\right ]
\nonumber
\\
+K_{1}(\theta )U_{i}(t)+K_{2}(\theta ).
\nonumber
\end{align}
Here we used the inequality
$(x+y)^{\theta }\le x^{\theta }+\theta x^{\theta -1}y$,
$0<\theta <1$, $x,y>0$. Due to
\begin{align}
\lim _{\theta \to 0+}\left [\frac{\theta }{2}\sigma _{i}^{2}(t)+
\frac{1}{\theta }\int \limits _{\mathbb{R}}[(1+\gamma _{i}(t,z))^{-
\theta }-1]\Pi _{1}(dz)\right .
\nonumber
\\
\left . +\frac{1}{\theta }\int \limits _{\mathbb{R}}[(1+\delta _{i}(t,z))^{-
\theta }-1]\Pi _{2}(dz)\right ]
\nonumber
\\
=-\int \limits _{\mathbb{R}}\ln (1+\gamma _{i}(t,z))\Pi _{1}(dz)-
\int \limits _{\mathbb{R}}\ln (1+\delta _{i}(t,z))\Pi _{2}(dz),
\nonumber
\end{align}
and the condition
$\min _{i=1,2}\inf _{t\ge 0}(a_{i\min }(t)-\beta _{i}(t))>0$ we can choose
a sufficiently small $0<\theta <1$ to satisfy
\begin{align}
K_{0}(\theta )=\min _{i=1,2}\inf _{t\ge 0}\left \{  a_{i\min }(t)-
\frac{\sigma _{i}^{2}(t)}{2}-\int \limits _{\mathbb{R}}\gamma _{i}(t,z)
\Pi _{1}(dz)-\frac{\theta }{2}\sigma _{i}^{2}(t)\right .
\nonumber
\\
\left . - \frac{1}{\theta }\int \limits _{\mathbb{R}}[(1+\gamma _{i}(t,z))^{-
\theta }-1]\Pi _{1}(dz) -\frac{1}{\theta }\int \limits _{\mathbb{R}}[(1+
\delta _{i}(t,z))^{-\theta }-1]\Pi _{2}(dz)\right \}  >0.
\nonumber
\end{align}

So from \textup{(\ref{eq12})} and the estimate for $J(t)$ we derive
%
\begin{align}
\label{eq13}
d\left [(1+U_{i}(t))^{\theta }\right ]\le \theta (1+U_{i}(t))^{\theta -2}[-K_{0}(
\theta )U_{i}^{2}(t)+K_{1}(\theta )U_{i}(t)+K_{2}(\theta )]dt
\nonumber
\\
-\theta (1+U_{i}(t))^{\theta -1}U_{i}(t)\sigma _{i}(t)dw_{i}(t)+
\int \limits _{\mathbb{R}}\left [\left (1+
\frac{U_{i}(t)}{1+\gamma _{i}(t,z)}\right )^{\theta }\right .
\nonumber
\\
\left . -(1 +U_{i}(t))^{\theta }\rule{0pt}{18pt}\right ]\tilde{\nu }_{1}(dt,dz)
+ \int \limits _{\mathbb{R}}\left [\left (1+
\frac{U_{i}(t)}{1+\delta _{i}(t,z)}\right )^{\theta } -(1+U_{i}(t))^{\theta }\right ]\tilde{\nu }_{2}(dt,dz).
\end{align}
By the It\^{o} formula\index{It\^{o} formula} and \textup{(\ref{eq13})} we have
%
\begin{align}
\label{eq14}
d\left [e^{\lambda t}(1+U_{i}(t))^{\theta }\right ]=\lambda e^{\lambda t}(1+U_{i}(t))^{\theta }dt+e^{\lambda t} d\left [(1+U_{i}(t))^{\theta }\right ]
\nonumber
\\
\le e^{\lambda t}\theta (1+U_{i}(t))^{\theta -2}\left [-\left (K_{0}(
\theta )-\frac{\lambda }{\theta }\right )U_{i}^{2}(t)+\left (K_{1}(
\theta )+\frac{2\lambda }{\theta }\right )U_{i}(t)\right .
\nonumber
\\
\left . +K_{2}(\theta )+\frac{\lambda }{\theta }\right ]dt-\theta e^{
\lambda t}(1+U_{i}(t))^{\theta -1}U_{i}(t)\sigma _{i}(t)dw_{i}(t)
\nonumber
\\
+e^{\lambda t}\int \limits _{\mathbb{R}}\left [\left (1+
\frac{U_{i}(t)}{1+\gamma _{i}(t,z)}\right )^{\theta }-(1+U_{i}(t))^{\theta }\right ]\tilde{\nu }_{1}(dt,dz)
\nonumber
\\
\displaystyle + e^{\lambda t}\int \limits _{\mathbb{R}}\left [\left (1+
\frac{U_{i}(t)}{1+\delta _{i}(t,z)}\right )^{\theta }-(1+U_{i}(t))^{\theta }\right ]\tilde{\nu }_{2}(dt,dz).
\end{align}

Let us choose $\lambda >0$ such that
$K_{0}(\theta )-\lambda /\theta >0$. Then the function
\begin{align}
(1+U_{i}(t))^{\theta -2}\left [-\left (K_{0}(\theta )-
\frac{\lambda }{\theta }\right )U_{i}^{2}(t)+\left (K_{1}(\theta )+
\frac{2\lambda }{\theta }\right )U_{i}(t) +K_{2}(\theta )+
\frac{\lambda }{\theta }\right ]
\nonumber
\end{align}
is bounded from above by some constant $K>0$. So by integrating
\textup{(\ref{eq14})} and taking the expectation we obtain
%
\begin{align}
\label{eq15}
e^{\lambda t}\mathrm{E}\left [(1+U_{i}(t))^{\theta }\right ]\le \left (1+
\frac{1}{x_{i0}}\right )^{\theta }+\frac{\lambda }{\theta }K\left (e^{
\lambda t}-1\right ),
\end{align}
because the expectation of stochastic integrals\index{stochastic ! integrals} are equal zero by
\textup{(\ref{eq10})} and
\begin{align}
\mathrm{E}\left [\int \limits _{0}^{t} e^{2\lambda s}(1+U_{i}(s))^{2
\theta -2}U^{2}_{i}(s)\sigma _{i}^{2}(s)ds\right ]=\mathrm{E}\left [
\int \limits _{0}^{t}
\frac{e^{2\lambda s}\sigma _{i}^{2}(s)}{(1+x^{-1}_{i}(s))^{2-2\theta }x^{2}_{i}(s)}ds
\right ]
\nonumber
\\
\le \int \limits _{0}^{t} e^{2\lambda s}\sigma _{i}^{2}(s)\mathrm{E}[x_{i}^{2
\theta }]ds<\infty ,
\nonumber
\\
\mathrm{E}\left [\int \limits _{0}^{t}\!\!\int \limits _{\mathbb{R}}
e^{2\lambda s}\left [\left (1+\frac{U_{i}(s)}{1+\gamma _{i}(s,z)}
\right )^{\theta }-(1+U_{i}(s))^{\theta }\right ]^{2}\Pi _{1}(dz)ds
\right ]
\nonumber
\\
\le L_{1}\int \limits _{0}^{t}e^{2\lambda s}\mathrm{E}\left [x_{i}^{2
\theta }(s)\right ]ds<\infty ,
\nonumber
\\
\mathrm{E}\left [\int \limits _{0}^{t}\!\!\int \limits _{\mathbb{R}}
e^{2\lambda s}\left [\left (1+\frac{U_{i}(s)}{1+\delta _{i}(s,z)}
\right )^{\theta }-(1+U_{i}(s))^{\theta }\right ]^{2}\Pi _{2}(dz)ds
\right ]
\nonumber
\\
\le L_{2}\int \limits _{0}^{t}e^{2\lambda s}\mathrm{E}\left [x_{i}^{2
\theta }(s)\right ]ds<\infty ,
\nonumber
\end{align}
where
\begin{align}
L_{1}=\theta ^{2} \max \left \{
\frac{(\gamma ^{2})_{\max }}{(1+\gamma _{\min })^{2}},
\frac{(\gamma ^{2})_{\max }}{(1+\gamma _{\min })^{2\theta }}\right \}
\Pi _{1}(\mathbb{R})<\infty ,
\nonumber
\\
L_{2}=\theta ^{2} \max \left \{
\frac{(\delta ^{2})_{\max }}{(1+\delta _{\min })^{2}},
\frac{(\delta ^{2})_{\max }}{(1+\delta _{\min })^{2\theta }}\right \}
\Pi _{2}(\mathbb{R})<\infty .
\nonumber
\end{align}

From \textup{(\ref{eq15})} we obtain
%
\begin{align}
\label{eq16}
\limsup _{t\to \infty }\mathrm{E}\left [\left (\frac{1}{x_{i}(t)}
\right )^{\theta }\right ]=\limsup _{t\to \infty }\mathrm{E}\left [U_{i}^{
\theta }(t)\right ]
\nonumber
\\
\le \limsup _{t\to \infty }\mathrm{E}\left [(1+U_{i}(t))^{\theta }
\right ]\le \frac{\theta K}{\lambda },\quad  i=1,2.
\end{align}

From \textup{(\ref{eq11})} and \textup{(\ref{eq16})} by the Chebyshev inequality we
can derive that for an arbitrary $\varepsilon \in (0,1)$ there are
positive constants $H=H(\varepsilon )$ and $h=h(\varepsilon )$ such
that
\begin{equation*}
\liminf \limits _{t\to \infty }\mathrm{P}\{x_{i}(t)\le H\}\ge 1-
\varepsilon ,\qquad \liminf \limits _{t\to \infty }\mathrm{P}\{x_{i}(t)
\ge h\}\ge 1-\varepsilon ,\quad  i=1,2.
\nonumber
\qedhere \end{equation*}
\end{proof}
%

\section{Extinction,\index{extinction} nonpersistence and strong persistence in the mean}
\label{sec5}

The property of extinction\index{extinction} in the stochastic models of population dynamics
means that every species will become extinct with probability $1$.

\begin{defin}
The solution $X(t)=(x_{1}(t),x_{2}(t))$, $t\ge 0$, to the system
\textup{(\ref{eq2})} is said to be extinct\index{extinct} if for every initial data
$X_{0}>0$ we have $\lim _{t\to \infty }x_{i}(t)=0$ almost surely (a.s.),
$i=1,2$.
\end{defin}

\begin{thm}
Let Assumption \ref{ass1} be fulfilled. If
\begin{align}
{\bar{p}}^{*}_{i}=\limsup _{t\to \infty }\frac{1}{t}\int \limits _{0}^{t}p_{i
\max }(s)ds<0,\quad  \text{where}\ p_{i\max }(s)=a_{i\max }(s)-\beta _{i}(s),
\nonumber
\end{align}
$a_{i\max }(t)=\max _{j=1,2}a_{ij}(t)$, $i=1,2$, then the solution
$X(t)$ to the system \textup{(\ref{eq2})} with the initial condition
$X_{0}\in \mathbb{R}^{2}_{+}$ will be extinct.\index{extinct}
\end{thm}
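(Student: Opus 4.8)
The plan is to pass to the logarithm of each component and extract a pathwise upper bound. From the auxiliary equation \textup{(\ref{eq3})}, the process $v_i(t)=\ln x_i(t)$ satisfies an SDE whose drift is $\frac{a_{i1}(t)+a_{i2}(t)x_{3-i}(t)}{1+x_{3-i}(t)}-c_i(t)x_i(t)-\beta_i(t)$. The key elementary observation is that, writing $\frac{a_{i1}+a_{i2}x}{1+x}=\frac{1}{1+x}a_{i1}+\frac{x}{1+x}a_{i2}$, this first term is a convex combination of $a_{i1}(t)$ and $a_{i2}(t)$, hence bounded above by $a_{i\max}(t)=\max_{j=1,2}a_{ij}(t)$ for every $x\ge 0$. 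So I would integrate the equation for $v_i$ and, discarding the nonpositive term $-c_i(s)x_i(s)$, obtain
\[
\ln x_i(t)\le \ln x_{i0}+\int_0^t p_{i\max}(s)\,ds+M_i(t),
\]
where $p_{i\max}(s)=a_{i\max}(s)-\beta_i(s)$ and $M_i(t)$ is the martingale part of \textup{(\ref{eq3})}, i.e. the sum of the Wiener integral $\int_0^t\sigma_i(s)\,dw_i(s)$ and the two compensated Poisson integrals $\int_0^t\!\int_{\mathbb{R}}\ln(1+\gamma_i(s,z))\tilde{\nu}_1(ds,dz)$ and $\int_0^t\!\int_{\mathbb{R}}\ln(1+\delta_i(s,z))\tilde{\nu}_2(ds,dz)$.

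Next I would divide by $t$ and let $t\to\infty$. The term $\ln x_{i0}/t$ vanishes, and the deterministic integral contributes exactly $\bar{p}^*_i=\limsup_{t\to\infty}\frac1t\int_0^t p_{i\max}(s)\,ds<0$ by hypothesis. Thus the whole argument reduces to the claim that $M_i(t)/t\to 0$ a.s.

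The hard part will be this strong law for the martingale $M_i$, which mixes a continuous Wiener part with two compensated Poisson parts. I would control it through its predictable quadratic variation: the continuous part contributes $\int_0^t\sigma_i^2(s)\,ds\le \sigma_{i\sup}^2\,t$, and each compensated Poisson integral contributes $\int_0^t\!\int_{\mathbb{R}}[\ln(1+\gamma_i(s,z))]^2\Pi_1(dz)\,ds$ (respectively with $\delta_i,\Pi_2$). Under Assumption~\ref{ass1} the integrands $\ln(1+\gamma_i)$ and $\ln(1+\delta_i)$ are bounded and $\Pi_1(\mathbb{R}),\Pi_2(\mathbb{R})<\infty$, so the angle bracket grows at most linearly, $\langle M_i\rangle_t\le Ct$. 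By the strong law of large numbers for local martingales (at most linear growth of the predictable bracket forces $M_i(t)/t\to 0$ a.s.) the claim follows.

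Combining the three pieces gives $\limsup_{t\to\infty}\frac1t\ln x_i(t)\le \bar{p}^*_i<0$ almost surely, whence $\ln x_i(t)\to-\infty$ and therefore $x_i(t)\to 0$ a.s. as $t\to\infty$, for $i=1,2$, which is precisely extinction. Since the bound and the martingale estimates are identical in structure for both indices, it suffices to carry out the computation for one fixed $i$ and invoke symmetry.
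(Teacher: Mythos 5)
Your proposal is correct and follows essentially the same route as the paper: apply the It\^{o} formula to $\ln x_i(t)$, bound the convex combination $\frac{a_{i1}+a_{i2}x}{1+x}$ by $a_{i\max}$, discard $-c_i(s)x_i(s)$, and kill the martingale term via the linearly bounded quadratic variation and the strong law of large numbers for local martingales. No further changes are needed.
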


\begin{proof}
By the It\^{o} formula,\index{It\^{o} formula} we have
%
\begin{align}
\label{eq17}
\ln x_{i}(t)=\ln x_{i0}+\int \limits _{0}^{t}\left [\!\!\!
\phantom{\int \limits _{\mathbb{R}}}
\frac{a_{i1}(s)+a_{i2}(s)x_{3-i}(s)}{1+x_{3-i}(s)}-c_{i}(s)x_{i}(s)-
\frac{\sigma _{i}^{2}(s)}{2}\right .
\nonumber
\\
\left .+ \int \limits _{\mathbb{R}}[\ln (1+\gamma _{i}(s,z))-\gamma _{i}(s,z)]
\Pi _{1}(dz)+\int \limits _{\mathbb{R}}\ln (1+\delta _{i}(s,z))\Pi _{2}(dz)
\right ]ds
\nonumber
\\
+ \int \limits _{0}^{t}\sigma _{i}(s)dw_{i}(s)+\int \limits _{0}^{t}
\!\!\int \limits _{\mathbb{R}}\ln (1+\gamma _{i}(s,z))\tilde{\nu }_{1}(ds,dz)
\nonumber
\\
+ \int \limits _{0}^{t}\!\!\int \limits _{\mathbb{R}}\ln (1+\delta _{i}(s,z))
\tilde{\nu }_{2}(ds,dz)\le \ln x_{i0}+\int \limits _{0}^{t}p_{i\max }(s)ds+M(t),
\end{align}
where the martingale
%
\begin{align}
\label{eq18}
M(t)=\int \limits _{0}^{t}\sigma _{i}(s)dw_{i}(s)+\int \limits _{0}^{t}
\!\!\int \limits _{\mathbb{R}}\ln (1+\gamma _{i}(s,z))\tilde{\nu }_{1}(ds,dz)
\nonumber
\\
+ \int \limits _{0}^{t}\!\!\int \limits _{\mathbb{R}}\ln (1+\delta _{i}(s,z))
\tilde{\nu }_{2}(ds,dz)
\end{align}
has quadratic variation
\begin{align}
\langle M,M\rangle (t)=\int \limits _{0}^{t}\sigma ^{2}_{i}(s)ds+
\int \limits _{0}^{t}\!\!\int \limits _{\mathbb{R}}\ln ^{2}(1+
\gamma _{i}(s,z))\Pi _{1}(dz)ds
\nonumber
\\
+ \int \limits _{0}^{t}\!\!\int \limits _{\mathbb{R}}\ln ^{2}(1+
\delta _{i}(s,z))\Pi _{2}(dz)ds\le Kt.
\nonumber
\end{align}

Then the strong law of large numbers for local martingales (\cite{Lip})
yields\break $\lim _{t\to \infty }M(t)/t=0$ a.s. Therefore, from
\textup{(\ref{eq17})} we have
\begin{equation*}
\limsup _{t\to \infty }\frac{\ln x_{i}(t)}{t}\le \limsup _{t\to
\infty }\frac{1}{t}\int \limits _{0}^{t} p_{i\max }(s)ds<0, \quad
\text{a.s.}
\end{equation*}
So $\lim _{t\to \infty }x_{i}(t)=0$, $i=1,2$, a.s.
\end{proof}

\begin{defin}%
(\cite{Liu}) The solution $X(t)=(x_{1}(t),x_{2}(t))$, $t\ge 0$, to the system
\textup{(\ref{eq2})} is said to be nonpersistence in the mean if for every initial
data $X_{0}>0$ we have
$\lim _{t\to \infty }\frac{1}{t}\int \limits _{0}^{t}x_{i}(s)ds=0$ a.s.,
$i=1,2$.%
\end{defin}

\begin{thm}
Let Assumption \ref{ass1} be fulfilled. If
${\bar{p}}_{i}^{*}=0$, $i=1,2$, then the solution $X(t)$ to the system
\textup{(\ref{eq2})} with the initial condition $X_{0}\in \mathbb{R}^{2}_{+}$ will
be nonpersistence in the mean.
\end{thm}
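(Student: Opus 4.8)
The plan is to start from the logarithmic representation already derived in the extinction proof. Applying the It\^{o} formula exactly as in \textup{(\ref{eq17})}, and using the elementary bound $\frac{a_{i1}(s)+a_{i2}(s)x_{3-i}(s)}{1+x_{3-i}(s)}\le a_{i\max}(s)$ (the left-hand side is a convex combination of $a_{i1}(s)$ and $a_{i2}(s)$ with weights $\frac{1}{1+x_{3-i}(s)}$ and $\frac{x_{3-i}(s)}{1+x_{3-i}(s)}$) together with $c_i(s)\ge c_{\min}>0$ from Assumption \ref{ass1}, I would obtain the key inequality
\begin{equation*}
\ln x_i(t)\le \ln x_{i0}+\int_0^t p_{i\max}(s)\,ds-c_{\min}\int_0^t x_i(s)\,ds+M(t),
\end{equation*}
where $M(t)$ is the martingale \textup{(\ref{eq18})}. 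This is the same starting point as the extinction theorem, but now the sign information in $\bar{p}_i^*$ is weaker (equality instead of strict negativity), so the extinction argument no longer closes and one must control the time-average of $x_i$ directly — which is why the previously discarded dissipative term $-c_{\min}\int_0^t x_i(s)\,ds$ must be retained.

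Next I would dispose of the stochastic fluctuation. As in the extinction proof, the quadratic variation of $M$ grows at most linearly, so the strong law of large numbers for local martingales (\cite{Lip}) gives $M(t)/t\to 0$ a.s. Writing $g_i(t)=\ln x_{i0}+\int_0^t p_{i\max}(s)\,ds+M(t)$, the hypothesis $\bar{p}_i^*=0$ then yields $\limsup_{t\to\infty}g_i(t)/t=0$ a.s., and the key inequality becomes $\ln x_i(t)\le g_i(t)-c_{\min}\int_0^t x_i(s)\,ds$.

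The heart of the argument is to turn this logarithmic inequality into a bound on $\Phi_i(t)=\int_0^t x_i(s)\,ds$. Since $\Phi_i'(t)=x_i(t)$, exponentiating gives $e^{c_{\min}\Phi_i(t)}\Phi_i'(t)\le e^{g_i(t)}$, and integrating from $0$ to $t$ produces
\begin{equation*}
c_{\min}\Phi_i(t)\le \ln\Bigl(1+c_{\min}\int_0^t e^{g_i(s)}\,ds\Bigr).
\end{equation*}
Fixing $\eps>0$, from $\limsup_{t\to\infty} g_i(t)/t=0$ there is a (path-dependent) $T_\eps$ with $g_i(s)\le \eps s$ for $s\ge T_\eps$, so $\int_0^t e^{g_i(s)}\,ds\le C_\eps+\eps^{-1}e^{\eps t}$ with $C_\eps=\int_0^{T_\eps}e^{g_i(s)}\,ds<\infty$. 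Dividing the last display by $t$, letting $t\to\infty$, and using that the exponential term dominates inside the logarithm gives $\limsup_{t\to\infty}\Phi_i(t)/t\le \eps/c_{\min}$ a.s. Since $\eps>0$ is arbitrary and $\Phi_i(t)\ge 0$, it follows that $\lim_{t\to\infty}\frac1t\int_0^t x_i(s)\,ds=0$ a.s. for $i=1,2$, which is precisely nonpersistence in the mean; this final step is the auxiliary comparison lemma of \cite{Liu}, applied pathwise.

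The step I expect to be the main obstacle is this last comparison argument, for two reasons. First, the effective ``growth rate'' here is not a constant but the Ces\`aro average $t^{-1}\int_0^t p_{i\max}(s)\,ds$, whose $\limsup$ only equals zero, so the integral $\int_0^t e^{g_i(s)}\,ds$ must be controlled through the $\eps$-splitting above rather than by a clean exponential bound. Second, the threshold $T_\eps=T_\eps(\omega)$ depends on the path through $M(t,\omega)/t\to 0$, so the estimate is genuinely pathwise, and one must ensure the almost-sure null sets coming from the martingale strong law and from the $\limsup$ condition are combined correctly before letting $\eps\to 0$.
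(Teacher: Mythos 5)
Your proposal is correct and follows essentially the same route as the paper: the same logarithmic inequality \textup{(\ref{eq19})} with the dissipative term $-c_{\min}\int_0^t x_i(s)\,ds$ retained, the same appeal to the strong law of large numbers for $M(t)$, and the same exponentiate-and-integrate comparison argument for $y_i(t)=\int_0^t x_i(s)\,ds$, differing only in bookkeeping (you integrate from $0$ and split $\int_0^t e^{g_i(s)}\,ds$ at $T_\eps$, while the paper integrates the differential inequality from $t_0$ onward). No gaps.
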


\begin{proof}
From the first equality in \textup{(\ref{eq17})} we have
%
\begin{align}
\label{eq19}
\ln x_{i}(t)\le \ln x_{i0}+\int \limits _{0}^{t} p_{i\max }(s)ds -c_{
\min }\int \limits _{0}^{t}x_{i}(s)ds+M(t),
\end{align}
where the martingale $M(t)$\index{martingale} is defined in \textup{(\ref{eq18})}. From the definition
of ${\bar{p}}^{*}_{i}$ and the strong law of large numbers for $M(t)$ it
follows that $\forall \varepsilon >0$, $\exists t_{0}\ge 0$ such that
\begin{align}
\frac{1}{t}\int \limits _{0}^{t} p_{i\max }(s)ds\le {\bar{p}}^{*}_{i}+
\frac{\varepsilon }{2},\ \frac{M(t)}{t}\le \frac{\varepsilon }{2},\quad  \forall t\ge t_{0},\ \mathrm{a.s.}
\nonumber
\end{align}
So, from \textup{(\ref{eq19})} we derive
%
\begin{align}
\label{eq20}
\ln x_{i}(t)- \ln x_{i0}\le t({\bar{p}}^{*}_{i}+\varepsilon )-c_{\min }
\int \limits _{0}^{t}x_{i}(s)ds=t\varepsilon -c_{\min }\int \limits _{0}^{t}x_{i}(s)ds.
\end{align}
Let $y_{i}(t) =\int _{0}^{t}x_{i}(s)ds$, then from \textup{(\ref{eq20})} we have
\begin{align}
\ln \left (\frac{dy_{i}(t)}{dt}\right )\le \varepsilon t-c_{\min } y_{i}(t)+
\ln x_{i0} \Rightarrow e^{c_{\min }y_{i}(t)}\frac{dy_{i}(t)}{dt}\le x_{i0}e^{
\varepsilon t}.
\nonumber
\end{align}
Integrating the last inequality from $t_{0}$ to $t$ yields
\begin{align}
e^{c_{\min }y_{i}(t)}\le \frac{c_{\min }x_{i0}}{\varepsilon }\left (e^{
\varepsilon t}-e^{\varepsilon t_{0}}\right ) +e^{c_{\min }y_{i}(t_{0})},
\quad  \forall t\ge t_{0},\ \mathrm{a.s.}
\nonumber
\end{align}
So
\begin{align}
y_{i}(t)\le \frac{1}{c_{\min }}\ln \left [e^{c_{\min }y_{i}(t_{0})}+
\frac{c_{\min }x_{i0}}{\varepsilon }\left (e^{\varepsilon t}-e^{
\varepsilon t_{0}}\right ) \right ], \quad  \forall t\ge t_{0},\ \mathrm{a.s.},
\nonumber
\end{align}
and therefore
\begin{align}
\limsup _{t\to \infty }\frac{1}{t}\int \limits _{0}^{t}x_{i}(s)ds\le
\frac{\varepsilon }{c_{\min }}, \ \ \mathrm{a.s.}
\nonumber
\end{align}
Since $\varepsilon >0$ is arbitrary and
$X(t)\in \mathbb{R}^{2}_{+}$, we have
\begin{equation*}
\lim _{t\to \infty }\frac{1}{t}\int \limits _{0}^{s}x_{i}(s)ds=0,\quad  i=1,2,
\ \mathrm{a.s.}
\qedhere
\end{equation*}
\end{proof}

\begin{defin}
(\cite{Liu}) The solution $X(t)=(x_{1}(t),x_{2}(t))$, $t\ge 0$, to the system
\textup{(\ref{eq2})} is said to be strongly persistence\index{persistence} in the mean if for every
initial data $X_{0}>0$ we have
$\liminf _{t\to \infty }\frac{1}{t}\int \limits _{0}^{t}x_{i}(s)ds>0$ a.s.,
$i=1,2$.%
\end{defin}

\begin{thm}
Let Assumption \ref{ass1} be fulfilled. If
${\bar{p}}_{i*}=\liminf _{t\to \infty }\frac{1}{t}\int \limits _{0}^{t}p_{i
\min }(s)ds>0$, {where} $p_{i\min }(s)=a_{i\min }(s)-\beta _{i}(s)$,
$a_{i\min }(t)=\min _{j=1,2}a_{ij}(t)$, $i=1,2$, then
\begin{align}
\liminf _{t\to \infty }\frac{1}{t}\int \limits _{0}^{t}x_{i}(s)ds\ge
\frac{{\bar{p}}_{i*}}{c_{i\sup }}.
\nonumber
\end{align}
Therefore, the solution $X(t)$ to the system \textup{(\ref{eq2})} with the initial
condition $X_{0}\in \mathbb{R}^{2}_{+}$ will be strongly persistence\index{persistence} in
the mean.
\end{thm}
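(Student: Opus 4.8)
The plan is to mirror the nonpersistence argument of the preceding theorem, reversing each inequality so as to extract a \emph{lower} bound on the time-average of $x_i$. I start again from the first (exact) equality in \textup{(\ref{eq17})}, which writes $\ln x_i(t)$ as $\ln x_{i0}$ plus a drift integral plus the martingale $M(t)$ of \textup{(\ref{eq18})}. The one new ingredient is a lower bound on the interaction term: since
\[
\frac{a_{i1}(s)+a_{i2}(s)x_{3-i}(s)}{1+x_{3-i}(s)}
\]
is a convex combination of $a_{i1}(s)$ and $a_{i2}(s)$ with weights $1/(1+x_{3-i})$ and $x_{3-i}/(1+x_{3-i})$, it is bounded below by $a_{i\min}(s)$. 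Combining this with $c_i(s)\le c_{i\sup}$ and recalling that the Wiener and Poisson drift corrections in \textup{(\ref{eq17})} collapse to $-\beta_i(s)$, I obtain
\[
\ln x_i(t)\ge \ln x_{i0}+\int_0^t p_{i\min}(s)\,ds-c_{i\sup}\int_0^t x_i(s)\,ds+M(t).
\]

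Next I would invoke the definition of $\bar{p}_{i*}$ together with the strong law of large numbers for $M(t)$, already available from the bound $\langle M,M\rangle(t)\le Kt$ established in the extinction proof. Thus for any $\varepsilon>0$ there is a $t_0$ with $\frac1t\int_0^t p_{i\min}(s)\,ds\ge \bar{p}_{i*}-\varepsilon/2$ and $M(t)/t\ge -\varepsilon/2$ for all $t\ge t_0$, giving
\[
\ln x_i(t)-\ln x_{i0}\ge (\bar{p}_{i*}-\varepsilon)\,t-c_{i\sup}\int_0^t x_i(s)\,ds,\quad \forall t\ge t_0,\ \mathrm{a.s.}
\]
Writing $y_i(t)=\int_0^t x_i(s)\,ds$ and exponentiating converts this into the differential inequality $e^{c_{i\sup}y_i(t)}y_i'(t)\ge x_{i0}e^{(\bar{p}_{i*}-\varepsilon)t}$, the exact mirror of the step used for nonpersistence.

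Integrating from $t_0$ to $t$ and solving for $y_i(t)$ then yields a bound of the form
\[
y_i(t)\ge \frac{1}{c_{i\sup}}\ln\!\left[e^{c_{i\sup}y_i(t_0)}+\frac{c_{i\sup}x_{i0}}{\bar{p}_{i*}-\varepsilon}\bigl(e^{(\bar{p}_{i*}-\varepsilon)t}-e^{(\bar{p}_{i*}-\varepsilon)t_0}\bigr)\right].
\]
Here the hypothesis $\bar{p}_{i*}>0$ is precisely what makes the scheme work: choosing $\varepsilon$ small enough that $\bar{p}_{i*}-\varepsilon>0$, the bracketed quantity grows like $e^{(\bar{p}_{i*}-\varepsilon)t}$, so dividing by $t$ and letting $t\to\infty$ gives $\liminf_{t\to\infty}y_i(t)/t\ge (\bar{p}_{i*}-\varepsilon)/c_{i\sup}$. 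Letting $\varepsilon\downarrow 0$ produces the asserted bound $\liminf_{t\to\infty}\frac1t\int_0^t x_i(s)\,ds\ge \bar{p}_{i*}/c_{i\sup}>0$, which is strong persistence in the mean. The only step requiring care is the asymptotic extraction of the exponential rate from inside the logarithm; since $\bar{p}_{i*}-\varepsilon>0$ forces the exponential term to dominate the additive constants, the rate is read off cleanly, and I expect no genuine obstacle beyond this bookkeeping.
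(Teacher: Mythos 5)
Your proposal is correct and follows essentially the same route as the paper: the same lower bound $\ln x_i(t)\ge \ln x_{i0}+\int_0^t p_{i\min}(s)\,ds-c_{i\sup}\int_0^t x_i(s)\,ds+M(t)$ obtained from the convex-combination observation, the same use of the strong law of large numbers for $M(t)$, and the same integration of the differential inequality for $y_i(t)=\int_0^t x_i(s)\,ds$ followed by letting $\varepsilon\downarrow 0$. No gaps.
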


\begin{proof}
From the first equality in \textup{(\ref{eq17})} we have
%
\begin{align}
\label{eq21}
\ln x_{i}(t)\ge \ln x_{i0}+\int \limits _{0}^{t} p_{i\min }(s)ds -c_{i
\sup }\int \limits _{0}^{t}x_{i}(s)ds+M(t),
\end{align}
where the martingale $M(t)$\index{martingale} is defined in \textup{(\ref{eq18})}. From the definition
of ${\bar{p}}_{i*}$ and the strong~law of large numbers for $M(t)$ it follows
that $\forall \varepsilon >0$, $\exists t_{0}\ge 0$ such that
$\frac{1}{t}\int \limits _{0}^{t} p_{i\min }(s)ds\ge {\bar{p}}_{i*}-
\frac{\varepsilon }{2}$, $\frac{M(t)}{t}\ge -\frac{\varepsilon }{2}$, $\forall t\ge t_{0}$, a.s. So, from \textup{(\ref{eq21})} we obtain
%
\begin{align}
\label{eq22}
\ln x_{i}(t)\ge \ln x_{i0}+ t({\bar{p}}_{i*}-\varepsilon )-c_{i\sup }
\int \limits _{0}^{t}x_{i}(s)ds.
\end{align}
Let us choose sufficiently small $\varepsilon >0$ such that
${\bar{p}}_{i*}-\varepsilon >0$.

Let $y_{i}(t) =\int _{0}^{t}x_{i}(s)ds$, then from \textup{(\ref{eq22})} we have
\begin{align}
\ln \left (\frac{dy_{i}(t)}{dt}\right )\ge ({\bar{p}}_{i*}-
\varepsilon ) t-c_{i\sup } y_{i}(t)+\ln x_{i0}.
\nonumber
\end{align}
Hence
$e^{c_{i\sup }y_{i}(t)}\frac{dy_{i}(t)}{dt}\ge x_{i0}e^{( {\bar{p}}_{i*}-
\varepsilon ) t}$. Integrating the last inequality from $t_{0}$ to $t$ yields
\begin{align}
e^{c_{i\sup }y_{i}(t)}\ge
\frac{c_{i\sup }x_{i0}}{{\bar{p}}_{i*}-\varepsilon }\left (e^{({\bar{p}}_{i*}-
\varepsilon ) t}-e^{({\bar{p}}_{i*}-\varepsilon ) t_{0}}\right ) +e^{c_{i
\sup }y_{i}(t_{0})},\quad  \forall t\ge t_{0}, \ \mathrm{a.s.}
\nonumber
\end{align}
So
\begin{align}
y_{i}(t)\ge \frac{1}{c_{i\sup }}\ln \left [e^{c_{i\sup }y_{i}(t_{0})}+
\frac{c_{i\sup }x_{i0}}{{\bar{p}}_{i*}-\varepsilon }\left (e^{({\bar{p}}_{i*}-
\varepsilon ) t}-e^{({\bar{p}}_{i*}-\varepsilon ) t_{0}}\right )
\right ],\ \mathrm{a.s.},
\nonumber
\end{align}
$\forall t\ge t_{0}$, and therefore
\begin{align}
\liminf _{t\to \infty }\frac{1}{t}\int \limits _{0}^{t}x_{i}(s)ds\ge
\frac{({\bar{p}}_{i*}-\varepsilon )}{c_{i\sup }}, \ \ \mathrm{a.s.}
\nonumber
\end{align}
Using the arbitrariness of $\varepsilon >0$ we get the assertion of the
theorem.
\end{proof}





%

\end{document}